%
%      This is a Latex 2e file. You must compile it twice.
%
%\documentclass[12pt,twoside,draft,noamsfonts,leqno,final]{amsart}
\documentclass[a4paper,12pt,twoside,leqno,final]{amsart}
\usepackage{amsmath}
\usepackage{amssymb}
\usepackage{hyperref}

\setlength{\textwidth}{15cm}
\setlength{\textheight}{22cm}
\setlength{\oddsidemargin}{2cm}
\setlength{\hoffset}{-2cm}
\setlength{\voffset}{-1cm}
%\setlength{\topmargin}{-1cm}
%\addtolength\headheight{4pt}

%\theoremstyle{plain}

\newtheorem{thm}{Theorem}[section]
\newtheorem{lem}[thm]{Lemma}

\newtheorem{prop}[thm]{Proposition}
\newtheorem{cor}[thm]{Corollary}

\theoremstyle{definition}

\newcommand{\C}{{\mathbb C}}
\newcommand{\D}{{\mathbb D}}

\newcommand{\T}{{\mathbb T}}

\newcommand{\N}{{\mathbb N}}

\newcommand{\f}{\frac}
\newcommand{\ov}{\overline}

\newcommand{\de}{\delta}

\newcommand{\ze}{\zeta}
\renewcommand{\th}{\theta}
\newcommand{\si}{\sigma}

\numberwithin{equation}{section}

\title[Carleson-type embeddings with closed range]
{Carleson-type embeddings with closed range}
\author{Konstantin M. Dyakonov}
\address{Departament de Matem\`atiques i Inform\`atica, IMUB, BGSMath, Universitat de Barcelona, Gran Via 585, E-08007 Barcelona, Spain}
\address{ICREA, Pg. Llu\'is Companys 23, E-08010 Barcelona, Spain}
\email{konstantin.dyakonov@icrea.cat}
\keywords{Carleson measure, Hardy spaces, Bergman spaces, closed range, interpolation, sampling}
\subjclass[2010]{30H10, 30H20, 47B91}
\thanks{Supported in part by grants PID2021-123405NB-I00 and PID2024-160033NB-I00 from El Ministerio de Ciencia, Innovaci\'on y Universidades (Spain).}

\begin{document}
\begin{abstract}
We characterize the Carleson measures $\mu$ on the unit disk for which the image of the Hardy space $H^p$ under the corresponding embedding operator is closed in $L^p(\mu)$. In fact, a more general result involving $(p,q)$-Carleson measures is obtained. A similar problem is solved in the setting of Bergman spaces.
\end{abstract}

\maketitle

\section{Introduction and statement of results}

Let $\D$ stand for the disk $\{z\in\C:\,|z|<1\}$, $\T$ for its boundary, and $m$ for the normalized arc length measure on $\T$. The Lebesgue space $L^p:=L^p(\T,m)$ with $0<p\le\infty$ is then defined in the usual manner and endowed with the standard norm $\|\cdot\|_p$\,; when $0<p<1$, the term \lq\lq quasinorm" should actually be used. The functions in $L^p$ are assumed to be complex-valued.

\par Given a holomorphic function $f$ on $\D$, one says that $f$ is in the {\it Hardy space} $H^p$ (with a finite $p>0$) if 
$$\|f\|_{H^p}:=\sup_{0<r<1}\left(\int_\T|f(r\ze)|^p\,dm(\ze)\right)^{1/p}<\infty,$$
and that $f$ is in $H^\infty$ if 
$$\|f\|_{H^\infty}:=\sup_{z\in\D}|f(z)|<\infty.$$
As usual, we identify each function $f\in H^p$ with its boundary trace on $\T$; the latter, denoted by $f$ again, is defined almost everywhere on $\T$ by means of radial---or nontangential---limits. Moreover, this boundary function is in $L^p$ and $\|f\|_p=\|f\|_{H^p}$. See, e.g., \cite[Chapter II]{G} for these matters and other basic facts about $H^p$.

\par Now suppose that $\mu$ is a (positive) Borel measure on $\D$, and let $0<p<\infty$. Write $L^p(\mu):=L^p(\D,\mu)$ for the corresponding Lebesgue space, i.e., the space of Borel measurable functions $f:\D\to\C$ with
$$\|f\|_{L^p(\mu)}:=\left(\int_{\D}|f|^p\,d\mu\right)^{1/p}<\infty.$$
Recall that $\mu$ is said to be a {\it Carleson measure} if for some (and then every) $p\in(0,\infty)$ one has $H^p\subset L^p(\mu)$. By a classical result (see \cite[Chapter II]{G} or \cite[Lecture VII]{NShift}), $\mu$ is a Carleson measure if and only if
$$\sup_I\f{\mu(Q_I)}{m(I)}<\infty,$$
where $I$ ranges over the open arcs on $\T$ and $Q_I$ stands for the \lq\lq Carleson square" with base $I$; that is, 
$$Q_I:=\left\{z\in\D:\,z/|z|\in I,\,\,|z|\ge1-m(I)\right\}$$
(when $z=0$, we put $z/|z|=1$). This characterization is known as Carleson's embedding theorem. 

\par Given a Carleson measure $\mu$ and a number $p\in(0,\infty)$, we write $j_p(\mu)$ for the natural embedding (or inclusion) operator that arises, going from $H^p$ to $L^p(\mu)$, whose action on a function from $H^p$ consists in restricting it to the support of $\mu$. This map
\begin{equation}\label{eqn:embpphar}
j_p(\mu):\,H^p\hookrightarrow L^p(\mu)
\end{equation}
is necessarily continuous (by the closed graph theorem), and it may happen to have various other properties; the corresponding criteria should be expressible in terms of $\mu$. The question that interests us here is: {\it For which Carleson measures $\mu$ does the embedding operator \eqref{eqn:embpphar} have closed range in $L^p(\mu)$?} Such measures $\mu$ are then said to have the closed range property.

\par One trivial example is provided by any measure $\mu$ supported on a finite set of points in $\D$. In this case, $L^p(\mu)$ is finite dimensional, while the operator \eqref{eqn:embpphar} is clearly surjective and hence has closed range. 

\par To see a more interesting example, consider an infinite sequence $\mathcal Z=\{z_n\}$ of pairwise distinct points in $\D$. Assume, in addition, that $\mathcal Z$ is an {\it interpolating} (or {\it $H^\infty$-interpolating}) {\it sequence}, meaning that 
$$H^\infty\big|_{\mathcal Z}=\ell^\infty.$$
(Here and below, given a function space $X$ on $\D$ and a sequence $\mathcal Z=\{z_n\}\subset\D$, the {\it trace space} $X\big|_{\mathcal Z}$ is defined as the set of all sequences $\{f(z_n)\}$ that arise as $f$ ranges over $X$.) By a famous theorem of Carleson (see, e.g., \cite[Chapter VII]{G}), $\mathcal Z$ is an interpolating sequence if and only if 
\begin{equation}\label{eqn:carconint}
\inf_k\prod_{j:\,j\ne k}\left|\f{z_j-z_k}{1-\ov z_jz_k}\right|>0.
\end{equation}
A well-known consequence of \eqref{eqn:carconint} is that the measure 
$$\mu_{\mathcal Z}:=\sum_n(1-|z_n|)\,\de_{z_n}$$
(where $\de_{z_n}$ stands for the unit point mass at $z_n$) is Carleson, i.e., 
$$H^p\big|_{\mathcal Z}\subset L^p(\mu_{\mathcal Z})$$
for some/any finite $p>0$. To keep on the safe side, we remark that $L^p(\mu_{\mathcal Z})$ is also viewed as a sequence space, its elements being precisely the sequences $\{w_n\}$ with $\sum_n|w_n|^p(1-|z_n|)<\infty$. In fact, the condition that $\mu_{\mathcal Z}$ be a Carleson measure (sometimes referred to as the {\it Carleson--Newman condition}) characterizes the sequences $\mathcal Z$ that are finite unions of interpolating sequences; see, e.g., \cite[p.\,109]{DS} or \cite[p.\,158]{NShift}. 

\par Now, if $\mathcal Z$ is an interpolating sequence, then we actually have 
\begin{equation}\label{eqn:shshkaint}
H^p\big|_{\mathcal Z}=L^p(\mu_{\mathcal Z})
\end{equation}
for every $p\in(0,\infty)$. Conversely, \eqref{eqn:shshkaint} implies that $\mathcal Z$ is an interpolating sequence. These facts were established by Shapiro and Shields \cite{SS} for $1\le p<\infty$, and then extended by Kabaila \cite{K} to the case $0<p<1$. 

\par Thus, if $\mathcal Z$ is an interpolating sequence and $\mu_{\mathcal Z}$ is the associated Carleson measure, then the embedding operator $j_p(\mu_{\mathcal Z})$---which is no other than the restriction map $f\mapsto f\big|_{\mathcal Z}$ on $H^p$---has all of $L^p(\mu_{\mathcal Z})$ for its range; in particular, the range is closed. Clearly, a similar conclusion holds for any \lq\lq perturbation" $\mu$ of $\mu_{\mathcal Z}$ given by 
$$\mu=\sum_na_n\de_{z_n},$$
where $a_n$ are positive numbers with $a_n\asymp1-|z_n|$, meaning that 
$$0<\inf_n\f{a_n}{1-|z_n|}\le\sup_n\f{a_n}{1-|z_n|}<\infty.$$
(In general, we use the sign $\asymp$ to mean that the ratio of the two quantities takes values in an interval of the form $[C^{-1},C]$ with some fixed constant $C\ge1$.) Indeed, for such a perturbed measure $\mu$, we obviously have $L^p(\mu)=L^p(\mu_{\mathcal Z})$. 

\par Having mentioned these examples of Carleson measures with the closed range property, we now claim that no other examples actually exist. This is implied by one of our results, namely Theorem \ref{thm:hardy}, to be stated and proved below.

\par A more general version of the problem arises if we fix two positive numbers, $p$ and $q$, and assume that $\mu$ is a {\it $(p,q)$-Carleson measure}. This means, by definition, that $\mu$ is a positive Borel measure on $\D$ satisfying $H^p\subset L^q(\mu)$. Such measures have been described geometrically in terms of Carleson squares (in the spirit of Carleson's original theorem, where $p=q$); this was done in \cite{Du} for $p<q$ and in \cite{V} for $p>q$. The latter case was also treated in \cite{LPLMS}. For a comprehensive survey of these results, the reader is referred to \cite{BJ}. 

\par Given a $(p,q)$-Carleson measure $\mu$, we write $j_{p,q}(\mu)$ for the associated (necessarily continuous) embedding---or inclusion---operator going from $H^p$ to $L^q(\mu)$. (The symbol $j_p(\mu)$ used above is thus an abbreviation for $j_{p,p}(\mu)$.) We then ask the same question as before, or rather a $(p,q)$-version thereof. Namely, we want to know whether the embedding map
\begin{equation}\label{eqn:embpqhar}
j_{p,q}(\mu):\,H^p\hookrightarrow L^q(\mu)
\end{equation}
has closed range. This certainly happens if $\mu$ is supported on a finite set of points in $\D$, so we may safely exclude this trivial case from further consideration. Otherwise, the answer is given by the following result. 

\begin{thm}\label{thm:hardy} Let $0<p,q<\infty$ and let $\mu$ be a $(p,q)$-Carleson measure supported on an infinite subset of $\D$. The following are equivalent: 
\par{\rm(i)} The range of the embedding operator \eqref{eqn:embpqhar} is closed in $L^q(\mu)$;
\par{\rm(ii)} $p=q$ and $\mu$ has the form $\sum_na_n\de_{z_n}$, where $\{z_n\}$ is an interpolating sequence in $\D$ and the positive numbers $a_n$ satisfy $a_n\asymp1-|z_n|$.
\end{thm}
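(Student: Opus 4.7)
The direction (ii)$\Rightarrow$(i) is essentially contained in the discussion preceding the theorem: if $\mu=\sum_n a_n\de_{z_n}$ with $\{z_n\}$ interpolating and $a_n\asymp 1-|z_n|$, then $L^p(\mu)=L^p(\mu_{\mathcal Z})$ with equivalent norms, and the Shapiro--Shields--Kabaila theorem gives $j_p(\mu)H^p=L^p(\mu)$, so the range is trivially closed. For the converse, the plan is to extract successive structural constraints on $\mu$ from the quantitative form of (i) supplied by the open mapping theorem (which applies to closed-range maps between F-spaces, covering also the cases $\min(p,q)<1$):
\[
\|f\|_{L^q(\mu)}\ge c\,\dist_{H^p}(f,\ker j_{p,q}(\mu)),\qquad f\in H^p,
\]
by testing it against the normalized reproducing-kernel analogues $f_a(z):=(1-|a|^2)^{1/p}(1-\ov a z)^{-2/p}$ (which satisfy $\|f_a\|_{H^p}\asymp 1$) and their finite superpositions at pseudohyperbolically separated points. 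Note that $\mu(\D)<\infty$ follows by applying the $(p,q)$-Carleson hypothesis to $f\equiv 1$.

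I would first identify the support. If $\operatorname{supp}\mu$ has an accumulation point in $\D$, then $\operatorname{supp}\mu$ is a uniqueness set for $H^p$, the kernel is $\{0\}$, and applying the displayed inequality to $f=z^N$ yields $1\lesssim\|z^N\|_{L^q(\mu)}\to 0$ by dominated convergence, a contradiction. So $\operatorname{supp}\mu$ is discrete in $\D$ and hence $\mu=\sum_n a_n\de_{z_n}$ with $a_n>0$; the same $z^N$ argument, together with the fact that a non-Blaschke zero set forces $f\equiv 0$ in $H^p$, shows that $\{z_n\}$ must be a Blaschke sequence and that $\ker j_{p,q}(\mu)=BH^p$, where $B$ is its Blaschke product. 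Next, testing with $f_{z_k}$: the pointwise bound $|g(z_k)|\le\|g\|_{H^p}(1-|z_k|^2)^{-1/p}$ combined with $B(z_k)=0$ yields $\dist_{H^p}(f_{z_k},BH^p)\asymp 1$, and together with the Carleson upper bound this gives the two-sided estimate
\[
a_k\asymp(1-|z_k|)^{q/p}.
\]

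To force $p=q$, I would take $M$ pseudohyperbolically separated points $z_{k_1},\ldots,z_{k_M}$ in $\{z_n\}$ and test both the closed-range inequality and the Carleson embedding on $f=\sum_{i=1}^M f_{z_{k_i}}$. The disjoint concentration of the $f_{z_{k_i}}$ on separated Carleson boxes gives $\|f\|_{H^p}\asymp M^{1/p}$, while the two-sided bound on $a_k$ gives $\|j_{p,q}(\mu)f\|_{L^q(\mu)}\asymp M^{1/q}$. Since $\{z_n\}$ is a Carleson--Newman sequence, the Carleson sum bound $\sum_n(1-|z_n|)|g(z_n)|^p\lesssim\|g\|_{H^p}^p$, restricted to the $k_j$-indices and applied to any $g\in H^p$ sharing the trace $(f(z_n))$, forces $\|g\|_{H^p}\gtrsim M^{1/p}$, so $\dist_{H^p}(f,BH^p)\asymp M^{1/p}$. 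Carleson then yields $M^{1/q}\lesssim M^{1/p}$ and the closed-range inequality yields $M^{1/q}\gtrsim M^{1/p}$, uniformly in $M$; this forces $p=q$ and $a_n\asymp 1-|z_n|$.

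Finally, with $p=q$ the two-sided inequality reads $\|f+BH^p\|_{H^p/BH^p}\asymp(\sum_n(1-|z_n|)|f(z_n)|^p)^{1/p}$ on all of $H^p$. If $\{z_n\}$ were Carleson--Newman but not interpolating, there would exist pseudohyperbolically close pairs $(z_n^{(1)},z_n^{(2)})$ with $\rho_n:=|(z_n^{(1)}-z_n^{(2)})/(1-\ov{z_n^{(1)}}z_n^{(2)})|\to 0$; using the Blaschke product of $\{z_n\}\setminus\{z_n^{(1)},z_n^{(2)}\}$ one constructs $f\in H^p$ realizing the trace $1$ at $z_n^{(1)}$, $-1$ at $z_n^{(2)}$, and $0$ elsewhere, whose $\ell^p(1-|z_n|)$-norm is $\asymp(1-|z_n^{(1)}|)^{1/p}$. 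The pseudohyperbolic estimate $|g(z_n^{(1)})-g(z_n^{(2)})|\lesssim\rho_n(1-|z_n^{(1)}|)^{-1/p}\|g\|_{H^p}$ forces any interpolant $g$ to satisfy $\|g\|_{H^p}\gtrsim(1-|z_n^{(1)}|)^{1/p}/\rho_n$, violating the equivalence as $n\to\infty$. Hence $\{z_n\}$ is interpolating and (ii) holds. The principal obstacle I anticipate is the $p=q$ step: matching the exponent hidden in $L^q(\mu)$ with $p$ by the scaling argument requires clean control of $\|\sum f_{z_{k_i}}\|_{H^p}$ and of $\dist_{H^p}(\sum f_{z_{k_i}},BH^p)$ for arbitrary $p\in(0,\infty)$, not merely $p=2$, and is where most of the technical care will be concentrated.
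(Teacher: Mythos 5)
Your reduction to the case where $\mu$ lives on a Blaschke sequence is correct, and your $z^N$ test is in fact a cleaner route to that reduction than the paper's (which goes through a reverse-Carleson lemma and an $L^p\hookrightarrow L^q$ non-embedding theorem). The gaps are all in the main case, where $\ker j_{p,q}(\mu)=BH^p$. First, the lower bound $a_k\gtrsim(1-|z_k|)^{q/p}$ does not follow from testing with $f_{z_k}$: the closed-range inequality only gives $\sum_na_n|f_{z_k}(z_n)|^q\ge c^q$, and the off-diagonal terms (points $z_n$ pseudohyperbolically close to $z_k$) may carry all of that mass; you would need to test instead with $B_kf_{z_k}$, where $B_k$ is the Blaschke product over $\{z_n\}_{n\ne k}$, so that the factor $|B_k(z_k)|$ appears on both sides and cancels. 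Second, in the $p=q$ step you invoke the Carleson--Newman property of $\{z_n\}$ to get $\dist_{H^p}(f,BH^p)\gtrsim M^{1/p}$, but at that stage you only know that $\sum_n(1-|z_n|)^{q/p}\de_{z_n}$ is a $(p,q)$-Carleson measure, and for $q\ne p$ this does not imply that $\sum_n(1-|z_n|)\de_{z_n}$ is Carleson (dyadic configurations give easy counterexamples), so the argument is circular. Third, the two-sided estimates $\|\sum_if_{z_{k_i}}\|_{H^p}\asymp M^{1/p}$ and $\|\sum_if_{z_{k_i}}\|_{L^q(\mu)}\asymp M^{1/q}$ fail under mere pairwise separation: the $H^p$ upper bound is problematic for $p>1$, and the $L^q(\mu)$ upper bound must account for every point of $\operatorname{supp}\mu$, not only the $M$ selected ones. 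All three defects look repairable by choosing the $M$ points with rapidly shrinking $1-|z_{k_i}|$, so that they form a uniformly interpolating subsequence and the kernels have essentially disjoint concentration, but that is a substantial amount of work your sketch does not carry out.

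The paper avoids the entire quantitative apparatus with one observation you are missing: once $\operatorname{supp}\mu=\{z_n\}$ is a Blaschke sequence, every finitely supported sequence is the trace of an $H^p$ function (a polynomial times the Blaschke product over $\{z_n\}_{n>N}$), and such sequences are dense in $L^q(\mu)$; since the range is closed, it must be all of $L^q(\mu)$, i.e., $H^p|_{\mathcal Z}=L^q(\mu)$. Because $L^q(\mu)$ is an ideal space, Lemma~\ref{lem:idealtrsp}(a) then yields that $\mathcal Z$ is interpolating, whence $H^p|_{\mathcal Z}=L^p(\mu_{\mathcal Z})$, and the resulting identity $L^q(\mu)=L^p(\mu_{\mathcal Z})$ of weighted sequence spaces forces $p=q$ (since $\ell^p\not\cong\ell^q$) and $a_n\asymp1-|z_n|$. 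I would recommend either adopting this surjectivity argument or, if you prefer the test-function route, repairing the three points above.
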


\par As an immediate consequence, we obtain an amusing characterization of interpolating sequences. 

\begin{cor}\label{cor:intseq} Let $0<p<\infty$ and let $\mathcal Z=\{z_n\}$ be a sequence of pairwise distinct points in $\D$. Then $\mathcal Z$ is an interpolating sequence if and only if one can find a Carleson measure $\mu$, supported precisely on $\mathcal Z$, such that the embedding operator $j_p(\mu):H^p\hookrightarrow L^p(\mu)$ has closed range.
\end{cor}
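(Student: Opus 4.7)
The plan is to deduce the corollary directly from Theorem~\ref{thm:hardy}, specialized to the case $q=p$, together with the Shapiro--Shields and Kabaila identity \eqref{eqn:shshkaint} already recalled in the introduction. A finite sequence $\mathcal Z$ is trivially interpolating, and any finitely supported $\mu$ makes $L^p(\mu)$ finite dimensional with $j_p(\mu)$ automatically of closed range; so I may assume throughout that $\mathcal Z$ is infinite.

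For the direction \emph{$\mathcal Z$ interpolating $\Rightarrow$ closed-range embedding}, I would simply exhibit the canonical witness $\mu_{\mathcal Z}=\sum_n(1-|z_n|)\de_{z_n}$. Carleson's interpolation condition \eqref{eqn:carconint} ensures that $\mu_{\mathcal Z}$ is a Carleson measure, and its support is manifestly $\mathcal Z$. The Shapiro--Shields--Kabaila theorem recorded in \eqref{eqn:shshkaint} gives $H^p\big|_{\mathcal Z}=L^p(\mu_{\mathcal Z})$ for every $p\in(0,\infty)$, so $j_p(\mu_{\mathcal Z})$ is surjective and, in particular, has closed range.

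For the reverse direction, assume that such a $\mu$ exists. Since $\mathcal Z$ is infinite and $\operatorname{supp}(\mu)=\mathcal Z$, Theorem~\ref{thm:hardy} (with $q=p$) applies and forces $\mu$ to have the form $\sum_n a_n\de_{w_n}$ for some interpolating sequence $\{w_n\}\sb\D$ with $a_n\asymp 1-|w_n|$. Comparing supports, the (pairwise distinct) points $\{w_n\}$ and $\{z_n\}$ coincide as sets, so $\mathcal Z$ itself must be interpolating. The only nontrivial input is Theorem~\ref{thm:hardy} itself; given it, I foresee no real obstacle in this deduction, the essential observation being that the forward direction is handled by the classical construction $\mu_{\mathcal Z}$, while the converse is a direct transcription of the structural conclusion of the theorem.
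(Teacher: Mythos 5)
Your proposal is correct and follows exactly the route the paper intends: the forward direction via the canonical witness $\mu_{\mathcal Z}$ together with the Shapiro--Shields--Kabaila identity \eqref{eqn:shshkaint}, and the converse by reading off the structural conclusion (ii) of Theorem \ref{thm:hardy} and matching supports. The paper treats the corollary as an immediate consequence of precisely these ingredients, so there is nothing to add.
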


\par Here, by saying that $\mu$ is supported precisely on $\mathcal Z$ we mean that $\mu$ lives on $\mathcal Z$ and assigns positive mass to every point therein.

\par We now move on to consider a similar problem in the Bergman space setting. Let $\si$ denote normalized area measure on $\D$. For $0<p<\infty$, the {\it Bergman space} $A^p$ is defined as the set of all holomorphic functions $f$ on $\D$ that are in $L^p(\si)$; one puts then
$$\|f\|_{A^p}:=\|f\|_{L^p(\si)}.$$
A Borel measure $\mu$ on $\D$ is said to be a {\it Bergman-Carleson measure} if for some (or every) $p\in(0,\infty)$ one has $A^p\subset L^p(\mu)$. This last inclusion is, of course, equivalent to saying that 
\begin{equation}\label{eqn:bergcarl}
\|f\|_{L^p(\mu)}\le C\|f\|_{A^p},\qquad f\in A^p,
\end{equation}
for some constant $C=C(p,\mu)$. It was shown in \cite{Has} that the class of measures $\mu$ making \eqref{eqn:bergcarl} true is indeed independent of $p$, $0<p<\infty$, and that $\mu$ has this property if and only if
$$\sup_I\f{\mu(Q_I)}{m(I)^2}<\infty,$$
where $I$ ranges over the open arcs on $\T$. Alternative characterizations of Bergman-Carleson measures can be found in \cite[Chapter 2]{DS}. 

\par More generally, given $0<p,q<\infty$ and a Borel measure $\mu$ on $\mathbb D$, we say that $\mu$ is a {\it $(p,q)$-Bergman-Carleson measure} if $A^p\subset L^q(\mu)$. With any such $\mu$ we associate the (bounded) embedding operator 
\begin{equation}\label{eqn:capjpq}
J_{p,q}(\mu):A^p\hookrightarrow L^q(\mu),
\end{equation}
whose action on a function $f\in A^p$ amounts to restricting $f$ to the support of $\mu$. When $p=q$, a \lq\lq $(p,p)$-Bergman-Carleson measure" is just a Bergman-Carleson measure, and we write $J_p(\mu)$ for $J_{p,p}(\mu)$. 

\par In this context, our question takes the following form: {\it For which $(p,q)$-Bergman-Carleson measures $\mu$ is the range of the embedding operator \eqref{eqn:capjpq} closed in $L^q(\mu)$?} To state the answer, we need to introduce some more terminology.

\par Given a Bergman-Carleson measure $\mu$ and a number $p\in(0,\infty)$, we say that $\mu$ is a {\it sampling measure for $A^p$} (or an {\it $A^p$-sampling measure}) if there is a constant $c>0$ such that
\begin{equation}\label{eqn:sampmeas}
\|f\|_{L^p(\mu)}\ge c\|f\|_{A^p}
\end{equation}
for all $f\in A^p$. Clearly, we may couple the \lq\lq reverse Carleson estimate" \eqref{eqn:sampmeas} with the forward inequality \eqref{eqn:bergcarl} to get 
\begin{equation}\label{eqn:twosides}
\|f\|_{L^p(\mu)}\asymp\|f\|_{A^p},\qquad f\in A^p,
\end{equation}
so our $A^p$-sampling measures $\mu$ are precisely those which make \eqref{eqn:twosides} true (and this class of measures does depend on $p$). An obvious example of an $A^p$-sampling measure is, of course, provided by $\si$. As a more interesting example, we mention the measure $|B|^p\,d\si$, where $B$ is a Blaschke product whose zero-set is a finite union of interpolating sequences; see Theorem 10 in \cite[Chapter 4]{DS}.

\par A sequence $\mathcal Z=\{z_n\}$ of pairwise distinct points in $\D$ is, by definition, a {\it sampling sequence for $A^p$} (or an {\it $A^p$-sampling sequence}) if the associated measure 
\begin{equation}\label{eqn:nuznuz}
\nu_{\mathcal Z}:=\sum_n(1-|z_n|)^2\de_{z_n}
\end{equation}
is a sampling measure for $A^p$. Such sequences were characterized by Seip (along with $A^p$-interpolating sequences, to be defined shortly) in terms of densities; see \cite{SeInv, Se} or \cite[Chapter 6]{DS}. As to general sampling measures, these were subsequently described by Luecking in \cite{LMA}. (A minor terminological discrepancy should, however, be pointed out: a measure $\mu$ is $A^p$-sampling in our current sense if and only if $(1-|z|)^{-2}d\mu(z)$ is $A^p$-sampling in the sense of \cite{LMA}.)

\par Finally, we recall that a sequence of pairwise distinct points $\mathcal Z=\{z_n\}$ in the disk is said to be an {\it $A^p$-interpolating sequence} if the trace space $A^p|_{\mathcal Z}$ coincides with the sequence space 
$$\left\{\{w_n\}:\,\sum_n|w_n|^p(1-|z_n|)^2<\infty\right\},$$
which in turn can be naturally identified with $L^p(\nu_{\mathcal Z})$. The reader is again referred to \cite{SeInv, Se} or \cite[Chapter 6]{DS} for a characterization.

\begin{thm}\label{thm:bergman} Let $0<p,q<\infty$ and let $\mu$ be a $(p,q)$-Bergman-Carleson measure supported on an infinite subset of $\D$. The following are equivalent: 
\par{\rm(i)} The range of the embedding operator \eqref{eqn:capjpq} is closed in $L^q(\mu)$;
\par{\rm(ii)} $p=q$ and one of the two conditions holds: either $\mu$ is a sampling measure for $A^p$, or $\mu$ has the form $\sum_na_n\de_{z_n}$, where $\{z_n\}$ is an $A^p$-interpolating sequence in $\D$ and the positive numbers $a_n$ satisfy $a_n\asymp(1-|z_n|)^2$.
\end{thm}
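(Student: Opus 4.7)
The implication (ii) $\Rightarrow$ (i) splits into two easy sub-cases. If $\mu$ is $A^p$-sampling, then \eqref{eqn:twosides} makes $J_p(\mu)$ a topological embedding, which automatically has closed range. If instead $\mu=\sum_n a_n\de_{z_n}$ with $\{z_n\}$ an $A^p$-interpolating sequence and $a_n\asymp(1-|z_n|)^2$, then $L^p(\mu)$ coincides (with equivalent quasinorms) with the weighted sequence space $L^p(\nu_{\mathcal Z})$, while by definition the restriction map $A^p\to L^p(\nu_{\mathcal Z})$ is surjective; its range is all of $L^p(\mu)$, trivially closed.

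The substantive content lies in the converse (i) $\Rightarrow$ (ii). The first task is to rule out $p\ne q$. The closed-range hypothesis and the open mapping theorem yield the inequality
\begin{equation*}
\dist_{A^p}(f,\ker J_{p,q}(\mu))\lesssim\|f\|_{L^q(\mu)},\qquad f\in A^p,
\end{equation*}
and I would apply it to the normalized Bergman reproducing kernels $k_w(z):=(1-|w|^2)^{2/p}/(1-\ov wz)^{4/p}$ (for which $\|k_w\|_{A^p}\asymp 1$), and more generally to linear combinations $\sum c_j k_{w_j}$ supported on a pseudohyperbolically separated net. Comparing the $p$- and $q$-dependent scalings of the two sides against the Carleson upper bound $\|f\|_{L^q(\mu)}\lesssim\|f\|_{A^p}$, any mismatch forces the support of $\mu$ to be finite, contradicting our assumption. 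This step is directly analogous to the Hardy-space argument used for Theorem \ref{thm:hardy}.

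With $p=q$ established, I split on $\ker J_p(\mu):=\{f\in A^p:f\equiv 0\text{ on }\mathrm{supp}\,\mu\}$. If this kernel is trivial, the closed-range inequality becomes $\|f\|_{A^p}\lesssim\|f\|_{L^p(\mu)}$, which together with \eqref{eqn:bergcarl} is precisely the sampling condition \eqref{eqn:twosides}; this gives option (a). Otherwise, any nonzero $f_0\in\ker J_p(\mu)$ has a discrete zero set in $\D$ containing $\mathrm{supp}\,\mu$, so $\mu=\sum_n a_n\de_{z_n}$ for some sequence $\{z_n\}\subset\D$. Testing \eqref{eqn:bergcarl} on $k_{z_n}$ gives $a_n\lesssim(1-|z_n|)^2$, while testing the closed-range inequality on a modification of $k_{z_n}$ that peaks at $z_n$ and vanishes on the other $z_j$ (available since $\{z_n\}$ is a Blaschke sequence) yields $a_n\gtrsim(1-|z_n|)^2$. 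Consequently $L^p(\mu)$ and $L^p(\nu_{\mathcal Z})$ coincide as quasi-Banach spaces.

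The main obstacle will be the final surjectivity step: showing that the closed subspace $J_p(\mu)(A^p)\subset L^p(\nu_{\mathcal Z})$ is in fact the whole space, i.e., that $\{z_n\}$ is $A^p$-interpolating. For $p\ge 1$ my plan is to use duality: were $J_p(\mu)(A^p)$ a proper closed subspace, Hahn--Banach would produce a nonzero $\phi\in L^{p'}(\nu_{\mathcal Z})$ annihilating it, and pairing against the Bergman reproducing formula would realize $\phi$ as the atomic expansion of some $G\in A^{p'}$; the Carleson--Newman structure of $\{z_n\}$ (implied by $\mu$ being Bergman-Carleson) together with the sampling/interpolating dichotomy from Seip and Luecking should then force $G=0$, whence $\phi=0$. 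For $p<1$, where $A^p$ is only a quasi-Banach space and duality breaks down, one has to replace this by a direct construction using the atomic decomposition of \cite[Chapter 2]{DS}, and this is the step where I expect the technical burden to be heaviest.
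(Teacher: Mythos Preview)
Your overall plan is sound in outline (the case split on $\ker J$ is correct, and the sampling conclusion when $\ker J=\{0\}$ and $p=q$ is immediate), but two steps are not yet proofs. First, the argument for $p=q$ is only a gesture: when $\ker J$ is nontrivial the distances $\dist_{A^p}(k_w,\ker J)$ are not computable from kernel asymptotics alone, and even in the injective case you do not say what the actual contradiction is. The paper handles this via Banach-space geometry rather than kernel scaling: in the injective case one raises to an $N$th power with $\widetilde p:=Np,\ \widetilde q:=Nq>2$, obtaining an isomorphic embedding $A^{\widetilde p}\cong\ell^{\widetilde p}\hookrightarrow L^{\widetilde q}(\mu)$, which Lemma~\ref{lem:isomembed} forbids unless $\widetilde p=\widetilde q$. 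In the non-injective case $p=q$ is not established up front at all but drops out at the very end.

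The more serious gap is your interpolation step. The ``sampling/interpolating dichotomy from Seip and Luecking'' you invoke does not exist in the form you need: a separated sequence can perfectly well be neither $A^p$-sampling nor $A^p$-interpolating (the critical-density case), so nothing in your annihilator argument forces $G=0$; and for $p<1$ you have only a hope. The paper replaces all of this by an elementary observation that works uniformly in $p$: once $\mu=\sum_n a_n\de_{z_n}$, the finitely supported sequences are dense in $L^q(\mu)$ and each of them lies in $J(A^p)$ (divide out finitely many zeros from a nontrivial element of $\ker J$ and multiply by an interpolating polynomial), so closed range gives $A^p|_{\mathcal Z}=L^q(\mu)$ directly. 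Since $L^q(\mu)$ is an ideal sequence space, Lemma~\ref{lem:idealtrsp}(b) then says $\mathcal Z$ is $A^p$-interpolating; comparing $L^q(\mu)=A^p|_{\mathcal Z}=L^p(\nu_{\mathcal Z})$ yields both $p=q$ and $a_n\asymp(1-|z_n|)^2$ in one stroke, making your separate weight estimates (and the ``Blaschke sequence'' aside, which is neither the correct hypothesis nor guaranteed in the Bergman setting) unnecessary.
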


\par Here, again, the implication (ii)$\implies$(i) is immediate. Indeed, if $\mu$ is a sampling measure for $A^p$, then \eqref{eqn:sampmeas} tells us that the embedding operator $J_p(\mu):A^p\hookrightarrow L^p(\mu)$ is bounded below and therefore has closed range. Now assume that $\mathcal Z=\{z_n\}$ is an $A^p$-interpolating sequence and $\mu=\sum_na_n\de_{z_n}$, with $a_n\asymp(1-|z_n|)^2$. Consider also the measure $\nu_{\mathcal Z}$ given by \eqref{eqn:nuznuz}. The range of $J_p(\mu)$ is then the trace space $A^p|_{\mathcal Z}$, so it coincides with $L^p(\nu_{\mathcal Z})$ or, equivalently, with $L^p(\mu)$. In particular, the range of $J_p(\mu)$ is closed.

\par Theorem \ref{thm:bergman} can certainly be extended to weighted Bergman spaces, at least for standard power-type weights, but we do not pursue this line here. Rather, we want to compare the Hardy and Bergman versions of the problem, highlighting the subtleties that arise in the latter case. We also mention that some partial results related to Theorem \ref{thm:bergman} can be found in \cite{LPac}. 

\par To state our last result, which is an amusing consequence of Theorem \ref{thm:bergman}, we need to introduce yet another bit of notation. Given a Carleson measure $\mu$, we let $\mathcal C_H(\mu)$ denote the set of those numbers $p\in(0,\infty)$ for which the embedding operator $j_p(\mu):H^p\hookrightarrow L^p(\mu)$ has closed range. Similarly, if $\mu$ is a Bergman-Carleson measure, we write $\mathcal C_B(\mu)$ for the set of those $p\in(0,\infty)$ for which the embedding operator $J_p(\mu):A^p\hookrightarrow L^p(\mu)$ has closed range. (Of course, the subscripts in $\mathcal C_H$ and $\mathcal C_B$ stand for \lq\lq Hardy" and \lq\lq Bergman", respectively.)

\par It follows from Theorem \ref{thm:hardy} that, for any Carleson measure $\mu$, we have either $\mathcal C_H(\mu)=\emptyset$ or $\mathcal C_H(\mu)=(0,\infty)$. Things become different in the Bergman setting, though, in which case $\mathcal C_B(\mu)$ may well be disconnected.

\begin{prop}\label{prop:discon} For each $p_0\in(0,\infty)$ there exists a Bergman-Carleson measure $\mu$ such that
$$\mathcal C_B(\mu)=(0,\infty)\setminus\{p_0\}.$$
\end{prop}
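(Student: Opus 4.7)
\par The plan is to take $\mu:=\nu_{\mathcal Z}=\sum_n(1-|z_n|)^2\de_{z_n}$ for a carefully chosen separated sequence $\mathcal Z=\{z_n\}\sb\D$ and to decode the condition $p\in\mathcal C_B(\mu)$ via Theorem~\ref{thm:bergman}. Since $\mu$ already has the point-mass form with weights $a_n=(1-|z_n|)^2$, clause~(ii) of that theorem reduces to the assertion that $p\in\mathcal C_B(\mu)$ if and only if $\mathcal Z$ is either an $A^p$-sampling sequence or an $A^p$-interpolating sequence. (For a measure of the form $\nu_{\mathcal Z}$, being an $A^p$-sampling \emph{measure} is exactly the same as $\mathcal Z$ being an $A^p$-sampling \emph{sequence}, by the definition of the latter.) That $\nu_{\mathcal Z}$ is automatically a Bergman-Carleson measure whenever $\mathcal Z$ is separated is a standard row-by-row Carleson-square count.

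\par By Seip's theorems (\cite{SeInv,Se}; see also \cite[Chapter~6]{DS}), for a separated sequence $\mathcal Z$ in $\D$ one has
\begin{equation*}
\mathcal Z\text{ is }A^p\text{-sampling}\iff D^-(\mathcal Z)>1/p,\qquad \mathcal Z\text{ is }A^p\text{-interpolating}\iff D^+(\mathcal Z)<1/p,
\end{equation*}
where $D^\pm$ are the lower and upper Seip densities. Thus it suffices to exhibit a separated $\mathcal Z\sb\D$ with $D^-(\mathcal Z)=D^+(\mathcal Z)=1/p_0$: for $p>p_0$ one then has $D^-(\mathcal Z)=1/p_0>1/p$ and so $\mathcal Z$ is $A^p$-sampling; for $p<p_0$ one has $D^+(\mathcal Z)=1/p_0<1/p$ and so $\mathcal Z$ is $A^p$-interpolating; while at $p=p_0$ both of these \emph{strict} inequalities fail, ruling out both alternatives in clause~(ii) of Theorem~\ref{thm:bergman} and thereby excluding $p_0$ from $\mathcal C_B(\mu)$.

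\par Critical-density sequences of this kind are classical. A concrete model is a Seip-type hyperbolic lattice, say $z_{k,j}=(1-2^{-k})\exp(2\pi ij/N_k)$ for $0\le j<N_k$, with $N_k\asymp c\cdot 2^k$ and $k\ge0$; the common value of $D^-$ and $D^+$ depends continuously on the scaling parameter $c$ and sweeps out all of $(0,\infty)$, so $c$ can be tuned to make it equal $1/p_0$. The only genuine point of care in the whole argument is this final construction---checking that such a lattice is pseudohyperbolically separated and that its two Seip densities indeed coincide and take the prescribed value $1/p_0$; once that is in hand, Theorem~\ref{thm:bergman} together with the two Seip equivalences delivers $\mathcal C_B(\mu)=(0,\infty)\setminus\{p_0\}$ at once.
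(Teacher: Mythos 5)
Your argument is correct and follows the same strategy as the paper: reduce, via Theorem \ref{thm:bergman}, the statement ``$p\in\mathcal C_B(\nu_{\mathcal Z})$'' to the dichotomy that $\mathcal Z$ be either $A^p$-sampling or $A^p$-interpolating, and then exhibit a single sequence that is interpolating exactly for $p<p_0$ and sampling exactly for $p>p_0$. The only difference is the witness. The paper takes $\mathcal Z$ to be the zero set of the Horowitz product $H_b(z)=\prod_k(1-bz^{2^k})$ with $b=2^{1/p_0}$, for which precisely these two equivalences are already recorded in \cite[Section 6.7]{DS}, so no density computation is needed. You instead invoke Seip's density theorems and look for a separated sequence with $D^-(\mathcal Z)=D^+(\mathcal Z)=1/p_0$; the logic (strict inequalities on both sides failing exactly at $p=p_0$) is right, and your observation that $\nu_{\mathcal Z}$ is automatically Bergman--Carleson for separated $\mathcal Z$ takes care of the hypotheses of Theorem \ref{thm:bergman}. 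The one under-specified point is the lattice itself: with $N_k\asymp c\,2^k$ only up to bounded ratios, the upper and lower Seip densities need not coincide, so you should either take a genuinely regular lattice (such as Seip's $\Gamma(a,b)$ examples in \cite{Se}, whose two densities agree and sweep out $(0,\infty)$ as the parameters vary) or simply quote the Horowitz zero set, whose critical exponent is computed in \cite[Section 6.7]{DS}. With that choice pinned down, your proof is complete and essentially identical in substance to the paper's.
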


\par We remark, by way of digression, that a similar study of Carleson measures with the closed range property could definitely be performed for many other function spaces. In particular, various subspaces of $H^p$ and $A^p$ are worth looking at. To mention one specific case of interest---and a possible topic of future research---consider the {\it star-invariant} (or {\it model}) {\it subspace} $K^p_\th:=H^p\cap\th\ov z\ov{H^p}$, with $1\le p<\infty$, associated with an inner function $\th$. In general, Carleson measures for $K^p_\th$ are not completely well understood, and the problem of describing them (first posed in \cite{Co}) has a long history. Among the many papers that treat it, we cite \cite{A1, DAJM, TV} and \cite[pp.\,80--81]{L}. It is conceivable, though, that if the closed range condition is imposed, a neater characterization might come out. Here, we limit ourselves to observing that, while defined in the $H^p$ context, star-invariant subspaces tend to exhibit certain Bergman-type features. For instance, they possess nontrivial sampling measures (see, e.g., \cite[Theorem 3]{DLOMI} for a class of examples and \cite{BFGHR} for further details), their zero-sets vary with $p$ (see \cite{DJAM}), etc. Thus, with regard to the problem at hand, one would expect the situation with $K^p_\th$ to follow the $A^p$ pattern rather than that of $H^p$.

\par Going back to the original question about the range of the embedding map \eqref{eqn:embpphar}, one may wonder what happens if the Carleson measure $\mu$ is supported on the closed disk $\ov\D:=\D\cup\T$, not just on $\D$. Assuming that $\mu$ is a Borel measure on $\ov\D$ whose boundary component $\mu_\T:=\mu|_{\T}$ is absolutely continuous with respect to $m$, we still say that $\mu$ is Carleson if $H^p\subset L^p(\mu)$ for some, or any, finite $p>0$. For this to happen, the interior component $\mu_\D:=\mu|_{\D}$ must be a (classical) Carleson measure, while $\mu_\T$ must be of the form $d\mu_\T=h\,dm$ for some nonnegative function $h\in L^\infty$; see, e.g., \cite[Theorem 2.6]{BJ}. Now, if $\mu_\T$ (or, equivalently, $h$) is non-null, then the embedding operator $j_p(\mu)$ is one-to-one, and our question is answered by \cite[Theorem 5.3]{LLQR}. When coupled with the remark that follows it, the theorem tells us that $j_p(\mu)$ has closed range if and only if $h(=d\mu_\T/dm)$ is essentially bounded away from zero. This hopefully explains why we restrict our attention to classical Carleson measures, which live on $\D$.

\par The rest of the paper is devoted to the proofs of our current results on $H^p$ and $A^p$. In the next section, we collect a few auxiliary facts to lean upon. We then use them in the last two sections to prove Theorems \ref{thm:hardy} and \ref{thm:bergman}, along with Proposition \ref{prop:discon}.

\par Finally, the referee's helpful comments are gratefully acknowledged.

\section{Preliminaries}

Several lemmas will be needed. 

\begin{lem}\label{lem:norevcm} If $0<p<\infty$ and $\mu$ is a finite Borel measure on $\D$, then
\begin{equation}\label{eqn:nonrev}
\inf\left\{\|f\|_{L^p(\mu)}:\,f\in H^p,\,\|f\|_p=1\right\}=0.
\end{equation}
\end{lem}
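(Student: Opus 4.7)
The plan is to exhibit a sequence $\{f_n\}$ in $H^p$ with $\|f_n\|_p=1$ for every $n$ but $\|f_n\|_{L^p(\mu)}\to 0$, which immediately forces the infimum in \eqref{eqn:nonrev} to vanish.

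The idea is to pick a single inner function $\theta$ whose modulus is \emph{strictly} less than $1$ at every interior point of $\D$, and then take its high powers. A convenient choice is the atomic singular inner function
$$\theta(z):=\exp\!\left(-\f{1+z}{1-z}\right),$$
for which
$$|\theta(z)|=\exp\!\left(-\f{1-|z|^2}{|1-z|^2}\right)<1\qquad\text{for every }z\in\D.$$
Since $\theta$ is inner, $|\theta|=1$ almost everywhere on $\T$, and therefore $\|\theta^n\|_p=1$ for all $n\ge1$ and all $p\in(0,\infty)$. Setting $f_n:=\theta^n$, the first half of the requirement is taken care of.

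To finish, I would integrate $|f_n|^p=|\theta|^{np}$ against $\mu$. The integrand tends to $0$ pointwise on $\D$ as $n\to\infty$ and is dominated by the constant $1$, which is $\mu$-integrable because $\mu(\D)<\infty$ by hypothesis. The dominated convergence theorem then yields
$$\|f_n\|_{L^p(\mu)}^p=\int_\D|\theta(z)|^{np}\,d\mu(z)\longrightarrow 0,$$
which is exactly what is needed.

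There is essentially no obstacle here: the only ingredient to invoke beyond the basic definitions is the fact that a non-constant inner function has modulus strictly less than $1$ throughout $\D$ (by the maximum principle), which makes the pointwise convergence step, and hence the dominated convergence step, immediate. It is worth noting that the finiteness of $\mu$, baked into the hypothesis, is the only property of $\mu$ used; in particular, no Carleson-type assumption and no compactness of support is required.
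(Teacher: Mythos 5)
Your proof is correct, and it is genuinely different from---and more elementary than---the one in the paper. The paper argues by contradiction: assuming a reverse Carleson estimate $\|f\|_{L^p(\mu)}\ge c\|f\|_p$, it invokes a theorem of Hartmann--Massaneda--Nicolau--Ortega-Cerd\`a to the effect that any measure on $\ov\D$ satisfying such an estimate (for $1<p<\infty$) must charge the boundary $\T$, which a measure living on the open disk cannot do; the range $0<p\le1$ is then reduced to the previous case by replacing $f$ with $g^N$. Your argument instead produces an explicit unit-norm sequence $f_n=\th^n$ with $\|f_n\|_{L^p(\mu)}\to0$: since $\th$ is inner and nonconstant, $|\th|<1$ throughout $\D$ while $|\th|=1$ a.e.\ on $\T$, so $\|\th^n\|_p=1$ and $|\th|^{np}\to0$ pointwise on $\D$; dominated convergence (the dominant being the constant $1$, integrable because $\mu(\D)<\infty$) finishes the job. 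This buys self-containedness, uniformity in $p\in(0,\infty)$ with no case split, and independence from the reverse-Carleson literature; what it does not give is the sharper structural information recorded parenthetically in the paper (the characterization of reverse Carleson measures on $\ov\D$), which the author mentions but does not actually need for the lemma. A cosmetic remark: the even simpler choice $f_n(z)=z^n$ would serve identically, since $\|z^n\|_p=1$ and $|z|^{np}\to0$ on $\D$.
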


\begin{proof} If the infimum in \eqref{eqn:nonrev} were positive, we would have 
\begin{equation}\label{eqn:revcarlmeas}
\|f\|_{L^p(\mu)}\ge c\|f\|_p,\qquad f\in H^p,
\end{equation}
with some fixed $c>0$. Thus, we need to show that this \lq\lq reverse Carleson estimate" is actually false.

\par When $1<p<\infty$, we can readily apply a result from \cite{HMNO}. In fact, for $p$ in this range, it follows from \cite[Theorem 2.1]{HMNO} that any measure $\mu$ on the {\it closed} disk $\ov\D$ which makes \eqref{eqn:revcarlmeas} true, with the disk algebra $H^\infty\cap C(\T)$ in place of $H^p$, must assign nonzero mass to $\T$. (Moreover, such \lq\lq reverse Carleson measures" $\mu$ on $\ov\D$ are characterized in \cite{HMNO} by the condition that the Radon-Nikodym derivative of $\mu|_\T$ with respect to $m$ is bounded away from zero. For measures that are also assumed to be Carleson, this condition appeared earlier in \cite{LLQR}.) Since our $\mu$ lives on the open disk $\D$, we see that \eqref{eqn:revcarlmeas} is sure to fail. 

\par It remains to refute \eqref{eqn:revcarlmeas} in the case where $0<p\le1$. Given such a $p$, let $N$ be a positive integer for which $Np>1$. Now put $s:=Np$ and take an arbitrary function $g\in H^s$. We have then $g^N\in H^p$, and applying \eqref{eqn:revcarlmeas} with $f=g^N$ yields
$$\|g\|_{L^s(\mu)}\ge c^{1/N}\|g\|_s,\qquad g\in H^s,$$
an estimate already disproved above.
\end{proof}

\par To keep on the safe side, let us recall the notation $\ell^p:=\ell^p(\N)$ for the space of all $p$th power summable sequences of complex numbers, normed in the natural way.

\begin{lem}\label{lem:isomembed} Let $\mu$ be a finite Borel measure on $\D$. If $2<p,q<\infty$ and $p\ne q$, then $L^q(\mu)$ does not have any subspace isomorphic to $L^p$ or $\ell^p$. 
\end{lem}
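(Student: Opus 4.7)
The plan is to reduce the statement to a dichotomy for subspaces of $L^q$ due to Kadec and Pe{\l}czy{\'n}ski. First I would dispense with the $L^p$ case by noting that for any $\sigma$-finite measure space with infinitely many disjoint sets of positive measure, the associated $L^p$ contains an isomorphic copy of $\ell^p$ spanned by suitably normalized indicators; hence an isomorphic embedding of $L^p$ into $L^q(\mu)$ automatically yields an isomorphic embedding of $\ell^p$ into $L^q(\mu)$. So it suffices to rule out $\ell^p \hookrightarrow L^q(\mu)$.

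Next, since $\mu$ is a finite Borel measure on the Polish space $\D$, the space $L^q(\mu)$ is a separable $L^q$-space; by the standard classification, $L^q(\mu)$ is isometric to one of $L^q[0,1]$, $\ell^q(N)$, or $L^q[0,1] \oplus_q \ell^q(N)$ for some at most countable $N$, and each of these embeds isometrically into $L^q([0,1],\mathrm{Leb})$. Thus a hypothetical isomorphic copy of $\ell^p$ inside $L^q(\mu)$ would sit inside $L^q[0,1]$, and the crux is the Kadec--Pe{\l}czy{\'n}ski theorem: for $2 < q < \infty$, every infinite-dimensional closed subspace $X$ of $L^q[0,1]$ either embeds isomorphically into $L^2[0,1]$ (and so, being separable and infinite-dimensional, is isomorphic to $\ell^2$), or contains a subspace isomorphic to $\ell^q$.

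Finally, I would rule out both alternatives for $X$ isomorphic to $\ell^p$ with $2 < p$ and $p \ne q$. The Hilbert alternative is impossible because $\ell^p$ has optimal cotype $p > 2$, whereas every Hilbert space has cotype $2$. The $\ell^q$ alternative is impossible because the Bessaga--Pe{\l}czy{\'n}ski selection principle, applied to the image in $\ell^p$ of the canonical basis of $\ell^q$ (which is weakly null and normalized), produces a subsequence equivalent to a block basic sequence in $\ell^p$, hence equivalent to the canonical basis of $\ell^p$ itself; but the canonical bases of $\ell^p$ and $\ell^q$ are inequivalent whenever $p \ne q$. The main obstacle I anticipate is locating Kadec--Pe{\l}czy{\'n}ski in the precise form above; modulo that citation, the remaining steps are classical Banach-space bookkeeping.
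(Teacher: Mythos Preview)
Your proof is correct and follows essentially the same route as the paper: embed $L^q(\mu)$ into $L^q[0,1]$ via the structure theory of separable $L^q$-spaces, and then invoke Kadec--Pe{\l}czy{\'n}ski. The only difference is packaging: the paper cites the ready-made consequence (that neither $L^p$ nor $\ell^p$ embeds in $L^q$ for $2<p,q<\infty$, $p\ne q$) as a black box from Albiac--Kalton, whereas you unpack that corollary by hand using the Kadec--Pe{\l}czy{\'n}ski dichotomy together with cotype and Bessaga--Pe{\l}czy{\'n}ski.
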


\begin{proof} It is a classical result that, for $p$ and $q$ as above, $L^q$ (the standard $L^q$ space over $\T$ or, equivalently, over the interval $[0,1]$) has no subspace isomorphic to $L^p$ or $\ell^p$; see \cite[Corollary 6.4.10]{AK} for a more general statement. 
	
\par Now, if $\mu$ is a finite sum of point masses, then $L^q(\mu)$ is finite dimensional and the lemma is trivially true. Otherwise, being infinite dimensional and separable, $L^q(\mu)$ must be isomorphic to either $\ell^q$ or $L^q$, depending on whether or not $\mu$ is purely atomic. (The separability of $L^q(\mu)$ is implied by the fact that the Borel $\sigma$-algebra of $\D$ is countably generated, and the isomorphism result just mentioned can be found in \cite[Chapter 5]{Sch} upon combining Theorem 5.1.1 with Proposition 5.4.5 therein. Alternatively, see \cite[p.\,83]{W}.) It follows that, in any event, $L^q(\mu)$ embeds isomorphically into $L^q$ (because $\ell^q$ does). Consequently, if $L^p$ or $\ell^p$ were isomorphic to a subspace of $L^q(\mu)$, it would also be isomorphic to a subspace of $L^q$. That would, however, contradict the classical result stated at the beginning of the proof.
\end{proof}

\par By way of preparation for our last lemma, suppose that $X$ is a Banach (or quasi-Banach) space of holomorphic functions on $\D$ and that $\mathcal Z=\{z_n\}$ is a sequence of pairwise distinct points in $\D$. For a sequence $\mathcal W=\{w_n\}$ from the trace space $X|_{\mathcal Z}$, its (quasi)norm therein is taken to be 
$$\inf\left\{\|f\|_X:\,f|_{\mathcal Z}=\mathcal W\right\}.$$
Further, given a (quasi-)Banach sequence space $Y$ and a sequence of complex numbers $\{m_n\}$, we say that $\{m_n\}$ is a multiplier for $Y$ if the operator $\{w_n\}\mapsto\{m_nw_n\}$ acts boundedly on $Y$. We write $\mathfrak M(Y)$ for the space of such multipliers, normed appropriately. Finally, $Y$ is called an {\it ideal space} if the (continuous) embedding $\ell^\infty\subset\mathfrak M(Y)$ holds.
 
\begin{lem}\label{lem:idealtrsp} Let $0<p<\infty$ and let $\mathcal Z=\{z_n\}$ be a sequence of pairwise distinct points in $\D$. Then 
\par{\rm (a)} $H^p|_{\mathcal Z}$ is an ideal space if and only if $\mathcal Z$ is an interpolating sequence; 
\par{\rm (b)} $A^p|_{\mathcal Z}$ is an ideal space if and only if $\mathcal Z$ is an $A^p$-interpolating sequence.
\end{lem}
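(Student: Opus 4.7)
\emph{Plan.} In both parts the ``if'' direction is immediate: by Shapiro--Shields (with Kabaila's extension to $0<p<1$) one has $H^p|_{\mathcal Z}=L^p(\mu_{\mathcal Z})$ whenever $\mathcal Z$ is $H^\infty$-interpolating, and by Seip's corresponding $A^p$-interpolation theorem one has $A^p|_{\mathcal Z}=L^p(\nu_{\mathcal Z})$ whenever $\mathcal Z$ is $A^p$-interpolating; in either case the trace space is a weighted $\ell^p$-space, which is manifestly ideal. The substantive work is all in the ``only if'' direction, which I shall treat in parallel for (a) and (b).

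For (a), suppose $Y:=H^p|_{\mathcal Z}$ is ideal. The constant $1\in H^p$ gives the trace $\{1\}\in Y$ with $\|\{1\}\|_Y\le 1$, and the ideal hypothesis immediately yields $\|\mathcal M\|_Y\le C\|\mathcal M\|_\infty$ for every $\mathcal M\in\ell^\infty$, i.e.\ a continuous inclusion $\ell^\infty\hookrightarrow Y$. Taking $\mathcal M=e^{(k)}$ (the standard unit vector) forces $\mathcal Z$ to be a Blaschke sequence, for otherwise no nontrivial $H^p$-function could vanish on $\mathcal Z\setminus\{z_k\}$, contradicting the existence of an $H^p$-interpolant of $e^{(k)}$. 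The crucial step is then a reproducing-kernel trick: consider
\[
h_k(z):=(1-\ov z_kz)^{-2/p},
\]
for which the computation $\int_\T|1-\ov z_k\ze|^{-2}\,dm(\ze)=(1-|z_k|^2)^{-1}$ gives $\|h_k\|_{H^p}=(1-|z_k|^2)^{-1/p}$, while $h_k(z_k)=(1-|z_k|^2)^{-2/p}$ saturates the standard $H^p$-pointwise bound. Since $h_k|_{\mathcal Z}\in Y$ with norm at most $\|h_k\|_{H^p}$, the ideal hypothesis applied to the pointwise product with $e^{(k)}$ gives
\[
\|h_k(z_k)\,e^{(k)}\|_Y\le C\|h_k\|_{H^p}=C(1-|z_k|^2)^{-1/p},
\]
so dividing by $h_k(z_k)=(1-|z_k|^2)^{-2/p}$ yields the peaking estimate $\|e^{(k)}\|_Y\le C(1-|z_k|^2)^{1/p}$. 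On the other hand, factoring any interpolant of $e^{(k)}$ as $f=B_kg$ (where $B_k$ is the Blaschke product vanishing at $\{z_n:n\ne k\}$) and using the sharp extremal identity $\inf\{\|g\|_{H^p}:g(z_k)=c\}=|c|(1-|z_k|^2)^{1/p}$ gives the exact value $\|e^{(k)}\|_Y=(1-|z_k|^2)^{1/p}/|B_k(z_k)|$. Comparing, $|B_k(z_k)|\ge 1/C$ uniformly in $k$, so by Carleson's theorem $\mathcal Z$ is $H^\infty$-interpolating.

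Part (b) runs in parallel, with the test function $h_k(z):=(1-\ov z_kz)^{-4/p}$ in place of the Hardy one: here $\|h_k\|_{A^p}\asymp(1-|z_k|^2)^{-2/p}$ and $h_k(z_k)=(1-|z_k|^2)^{-4/p}$, again saturating the sharp Bergman pointwise bound. The multiplication-by-$e^{(k)}$ argument now produces the peaking bound $\|e^{(k)}\|_Y\le C(1-|z_k|^2)^{2/p}$, which rescales to uniformly bounded atomic functions $\tilde g_k\in A^p$ with $\tilde g_k(z_k)=(1-|z_k|^2)^{-2/p}$ and $\tilde g_k(z_n)=0$ for $n\ne k$. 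The pseudohyperbolic separation of $\mathcal Z$ follows from $\ell^\infty\hookrightarrow Y$ combined with the Bergman pointwise estimate, and Seip's peaking-function criterion for $A^p$-interpolating sequences (see \cite[Ch.~6]{DS}) then forces $\mathcal Z$ to be $A^p$-interpolating. The main obstacle throughout is converting the purely multiplicative ideal hypothesis into a sharp additive peaking estimate on $\|e^{(k)}\|_Y$; the reproducing-kernel trick accomplishes precisely this, by pitting the saturation property of $h_k$ at $z_k$ against the concentration effect of pointwise multiplication by $e^{(k)}$.
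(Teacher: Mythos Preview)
Your argument is correct and supplies the details that the paper itself omits: the paper does not actually prove this lemma but simply cites \cite[p.~188]{NShift} for part (a) and points to the Schuster--Seip characterization \cite{ScSe} for the adaptation needed in (b). Your reproducing-kernel estimate of $\|e^{(k)}\|_Y$ in part (a), combined with the exact identity $\|e^{(k)}\|_Y=(1-|z_k|^2)^{1/p}/|B_k(z_k)|$ obtained by dividing out $B_k$, is the standard route to Carleson's condition \eqref{eqn:carconint} and is presumably what the Nikol'ski\u\i\ reference contains. For (b), your peaking bound $\|e^{(k)}\|_Y\le C(1-|z_k|^2)^{2/p}$ produces exactly the ``weak interpolation'' hypothesis of the peaking-function criterion in \cite[Chapter~6]{DS}, which is the Schuster--Seip result, so the conclusion follows.

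One small remark on (b): the pseudohyperbolic separation of $\mathcal Z$ does not follow from the bare inclusion $\ell^\infty\hookrightarrow Y$ together with the pointwise bound $|f(z)|\le C\|f\|_{A^p}(1-|z|^2)^{-2/p}$ alone, since that combination says nothing about pairs of distinct points. Separation is either already built into the peaking-function criterion you invoke (the version in \cite{DS} does not list it as a separate hypothesis), or else follows from the existence of your normalized functions $\widetilde g_k$ via a hyperbolic Lipschitz estimate for $A^p$. Either way this is a matter of attribution rather than a gap.
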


\par The \lq\lq if" parts of (a) and (b) are trivial, since any weighted $\ell^p$-space is stable under multiplication by bounded sequences, while the \lq\lq only if" parts are known. In fact, for $p\ge1$, statement (a) is established in \cite[p.\,188]{NShift}; the proof given there actually works for $0<p<1$ as well. In this connection, we also cite \cite{Ha}, where a generalization of (a) is proved in the setting of Hardy-Orlicz spaces. To verify (b), one may proceed in a similar fashion, with a minor adjustment based on Schuster and Seip's Carleson-type characterization of $A^p$-interpolating sequences (see \cite{ScSe}).

\section{Proof of Theorem \ref{thm:hardy}}

We only need to prove that (i) implies (ii). As a first step, we shall deduce from (i) that $\mu$ is supported on a Blaschke set (i.e., on a subset $\mathcal Z$ of $\D$ satisfying $\sum_{z\in\mathcal Z}(1-|z|)<\infty$). Suppose this is not the case; then every $H^p$ function that vanishes $\mu$-a.e. is null, so the kernel of the embedding operator $j:H^p\hookrightarrow L^q(\mu)$ is trivial. (Here and below, we write $j$ for $j_{p,q}(\mu)$.) The range of this operator being closed in $L^q(\mu)$, we infer that $j$ maps $H^p$ isomorphically onto $j(H^p)$, so that
\begin{equation}\label{eqn:isompq}
c\|f\|_p\le\|f\|_{L^q(\mu)}\le C\|f\|_p,\qquad f\in H^p,
\end{equation}
with some constants $C>c>0$. In fact, when $p$ and $q$ are in $[1,\infty)$, the left-hand inequality in \eqref{eqn:isompq} is ensured by the standard open mapping theorem for Banach spaces, whereas a more general version involving $F$-spaces (see \cite[Chapter 2]{R}) is needed to cover the remaining cases.

\par Now let $N$ be a positive integer such that the numbers $\widetilde p:=Np$ and $\widetilde q:=Nq$ satisfy $\min(\widetilde p,\widetilde q)>2$. For any $g\in H^{\widetilde p}$ we have $g^N\in H^p$, and an application of \eqref{eqn:isompq} with $f=g^N$ gives
\begin{equation}\label{eqn:isompqtilde}
c^{1/N}\|g\|_{\widetilde p}\le\|g\|_{L^{\widetilde q}(\mu)}\le C^{1/N}\|g\|_{\widetilde p},\qquad g\in H^{\widetilde p}.
\end{equation}
This means that $H^{\widetilde p}$ embeds into $L^{\widetilde q}(\mu)$ and is isomorphic to its image (under the corresponding embedding map), which is a closed subspace of $L^{\widetilde q}(\mu)$. Since $H^{\widetilde p}$ is also isomorphic to $L^{\widetilde p}$ (see \cite{B}), it follows that $L^{\widetilde q}(\mu)$ contains an isomorphic copy of $L^{\widetilde p}$. In view of Lemma \ref{lem:isomembed} (applied with $\widetilde p$ and $\widetilde q$ in place of $p$ and $q$), this is only possible if $\widetilde p=\widetilde q$, or equivalently, if $p=q$. On the other hand, if $p=q$ then the left-hand inequality in \eqref{eqn:isompq} reduces to \eqref{eqn:revcarlmeas} and contradicts Lemma \ref{lem:norevcm}. 

\par Our claim that $\mu$ lives on a Blaschke set, say $\mathcal Z$, is now established. We also have $\#\mathcal Z=\infty$ by assumption, and we write $\mathcal Z=\{z_n\}$, where the $z_n$ are pairwise distinct points in $\D$ satisfying $\sum_n(1-|z_n|)<\infty$. The measure $\mu$ is then given by 
\begin{equation}\label{eqn:musum}
\mu=\sum_na_n\de_{z_n},
\end{equation}
the weights $a_n$ being positive numbers with $\sum_na_n<\infty$, so that $L^q(\mu)$ reduces to the appropriately weighted $\ell^q$-space. Namely, we can view functions from $L^q(\mu)$ as sequences $\mathcal W=\{w_n\}$ of complex numbers with
\begin{equation}\label{eqn:sumanwnq}
\left\|\mathcal W\right\|^q_{L^q(\mu)}=\sum_na_n|w_n|^q<\infty.
\end{equation}
Furthermore, letting $\mathcal F$ stand for the set of finitely supported sequences (i.e., those that have only finitely many nonzero elements), we note that $\mathcal F$ is dense in $L^q(\mu)$. It is also true that $\mathcal F\subset j(H^p)$, since every $\mathcal W$ from $\mathcal F$ is writable as $h|_{\mathcal Z}$ for some $h\in H^p$. Indeed, to interpolate a given sequence 
$$(w_1,\dots,w_N,0,0,\dots)\in\mathcal F$$
in this way, it suffices to put $h=B_NQ$, where $B_N$ is the Blaschke product with zeros $\{z_n\}_{n>N}$ and $Q$ is a polynomial that satisfies 
$$Q(z_n)=\f{w_n}{B_N(z_n)}\quad\text{\rm for}\quad n=1,\dots,N.$$
When coupled with condition (i), these observations allow us to conclude that $j(H^p)=L^q(\mu)$, or equivalently, 
\begin{equation}\label{eqn:tracepq}
H^p|_{\mathcal Z}=L^q(\mu).
\end{equation}

\par Finally, since $L^q(\mu)$ is obviously an ideal space, we may use \eqref{eqn:tracepq} in conjunction with Lemma \ref{lem:idealtrsp}, part (a), to infer that $\mathcal Z$ is an interpolating sequence. This in turn implies that 
\begin{equation}\label{eqn:tracepp}
H^p|_{\mathcal Z}=L^p(\mu_{\mathcal Z}), 
\end{equation}
where 
$$\mu_{\mathcal Z}:=\sum_n(1-|z_n|)\de_{z_n}.$$
Comparing \eqref{eqn:tracepq} and \eqref{eqn:tracepp}, we now see that $L^q(\mu)=L^p(\mu_{\mathcal Z})$. From this we readily conclude, first, that $p=q$ (e.g., by invoking the classical fact that otherwise $\ell^p$ is not isomorphic to $\ell^q$, see \cite{AK}) and, secondly, that the corresponding weight sequences are equivalent in the sense that $a_n\asymp1-|z_n|$ (otherwise the associated weighted $\ell^p$ spaces, namely $L^p(\mu)$ and $L^p(\mu_{\mathcal Z})$, would be different). The proof is complete.

\section{Proofs of Theorem \ref{thm:bergman} and Proposition \ref{prop:discon}}

\noindent{\it Proof of Theorem \ref{thm:bergman}.} Once again, we only need to prove that (i) implies (ii), and the strategy of the proof below will mimic that of the preceding one. We shall distinguish two cases.

\smallskip {\it Case 1:} Assume first that $\mu$ is supported on a uniqueness set for $A^p$, in the sense that every $A^p$ function vanishing $\mu$-a.e. is null. The kernel of the embedding operator $J:A^p\hookrightarrow L^q(\mu)$ is then trivial (we now write $J$ for $J_{p,q}(\mu)$), while its range is closed in $L^q(\mu)$ by hypothesis. Consequently, by the open mapping theorem, $J$ maps $A^p$ isomorphically onto $J(A^p)$, so that
\begin{equation}\label{eqn:isompqberg}
c\|f\|_{A^p}\le\|f\|_{L^q(\mu)}\le C\|f\|_{A^p},\qquad f\in A^p,
\end{equation}
for some constants $C,c>0$. (As before, we recall that the open mapping theorem applies to $F$-spaces, not just to Banach spaces; see \cite[Chapter 2]{R}. This is relevant when dealing with the quasi-Banach situation where $p$ or $q$ is less than $1$.)

\par We now claim that \eqref{eqn:isompqberg} can only hold if $p=q$. To see why, choose a positive integer $N$ such that the numbers $\widetilde p:=Np$ and $\widetilde q:=Nq$ satisfy $\min(\widetilde p,\widetilde q)>2$. For any $g\in A^{\widetilde p}$ we have $g^N\in A^p$, and applying \eqref{eqn:isompqberg} with $f=g^N$ yields
\begin{equation}\label{eqn:isompqbergtilde}
c^{1/N}\|g\|_{A^{\widetilde p}}\le\|g\|_{L^{\widetilde q}(\mu)}\le C^{1/N}\|g\|_{A^{\widetilde p}},\qquad g\in A^{\widetilde p}.
\end{equation}
This means that $A^{\widetilde p}$ embeds into $L^{\widetilde q}(\mu)$ and is isomorphic to its image (under the embedding map that arises), this image being a closed subspace of $L^{\widetilde q}(\mu)$. Since $A^{\widetilde p}$ is also isomorphic to $\ell^{\widetilde p}$ (see \cite[p.\,89]{W}), it follows that $L^{\widetilde q}(\mu)$ contains an isomorphic copy of $\ell^{\widetilde p}$. By virtue of Lemma \ref{lem:isomembed} (used with $\widetilde p$ and $\widetilde q$ in place of $p$ and $q$), this is only possible if $\widetilde p=\widetilde q$, or equivalently, if $p=q$. 

\par Finally, letting $q=p$ in \eqref{eqn:isompqberg}, we obtain 
\begin{equation}\label{eqn:ppberg}
c\|f\|_{A^p}\le\|f\|_{L^p(\mu)}\le C\|f\|_{A^p},\qquad f\in A^p,
\end{equation}
which means that $\mu$ is a sampling measure for $A^p$.

\smallskip {\it Case 2:} Now assume that $\mu$ is supported on an infinite set, say $\mathcal Z$, which is a zero-set for $A^p$ (meaning that there is a non-null $A^p$ function vanishing on $\mathcal Z$). If $\mathcal Z=\{z_n\}$, then $\mu$ takes the form \eqref{eqn:musum} for some positive numbers $a_n$ satisfying $\sum_na_n<\infty$. We may thus view $L^q(\mu)$ as a weighted $\ell^q$-space, the norm of a sequence $\mathcal W=\{w_n\}$ therein being defined by \eqref{eqn:sumanwnq}.

\par We further observe, exactly as before, that $\mathcal F$ (the set of sequences with finitely many nonzero elements) is dense in $L^q(\mu)$; we also claim that $\mathcal F\subset J(A^p)$. To verify this last inclusion, consider an arbitrary sequence 
$$\mathcal W=(w_1,\dots,w_N,0,0,\dots)$$
from $\mathcal F$, and let $\Phi_N$ be an $A^p$ function that vanishes on $\{z_n\}_{n>N}$ but takes nonzero values at $z_1,\dots,z_N$. (To arrive at such a function, take a nontrivial $\Phi\in A^p$ with $\Phi|_{\mathcal Z}=0$ and divide out the unwanted zeros at the $z_n$'s with $1\le n\le N$.) Finally, letting $Q$ be a polynomial for which
$$Q(z_n)=\f{w_n}{\Phi_N(z_n)}\quad\text{\rm for}\quad n=1,\dots,N,$$
and putting $h=Q\Phi_N$, we see that $h\in A^p$ and $h|_{\mathcal Z}=\mathcal W$. Now, since $J(A^p)$ contains a set (namely, $\mathcal F$) which is dense in $L^q(\mu)$, we deduce from condition (i) that $J(A^p)=L^q(\mu)$, or equivalently, 
\begin{equation}\label{eqn:bergtracepq}
A^p|_{\mathcal Z}=L^q(\mu).
\end{equation}

\par Because $L^q(\mu)$ is an ideal space, we may couple \eqref{eqn:bergtracepq} with Lemma \ref{lem:idealtrsp}, part (b), to infer that $\mathcal Z$ is an $A^p$-interpolating sequence. This means that 
\begin{equation}\label{eqn:bergtracepp}
A^p|_{\mathcal Z}=L^p(\nu_{\mathcal Z}), 
\end{equation}
where 
\begin{equation}\label{eqn:nuzdef}
\nu_{\mathcal Z}:=\sum_n(1-|z_n|)^2\de_{z_n}.
\end{equation}
Comparing \eqref{eqn:bergtracepq} and \eqref{eqn:bergtracepp} yields $L^q(\mu)=L^p(\nu_{\mathcal Z})$. From this we deduce, exactly as we did at the end of Section 3, that $p=q$ and finally conclude that $a_n\asymp(1-|z_n|)^2$ (otherwise the corresponding weighted $\ell^p$ spaces, namely $L^p(\mu)$ and $L^p(\nu_{\mathcal Z})$, would not be the same). The proof is complete.
\qed

\bigskip 

\noindent{\it Proof of Proposition \ref{prop:discon}.} For a fixed $p_0>0$, put $b:=2^{1/p_0}$ and let $\mathcal Z=\{z_n\}$ be the zero-set of the so-called Horowitz product $H_b$ given by 
$$H_b(z):=\prod_{k=1}^\infty\left(1-bz^{2^k}\right),\qquad z\in\D.$$
(Such products were introduced in \cite{Ho}.) Next, consider the measure $\mu:=\nu_{\mathcal Z}$, where $\nu_{\mathcal Z}$ is defined in terms of $\mathcal Z$ as in \eqref{eqn:nuzdef}. It is known that $\mathcal Z$ is an $A^p$-interpolating sequence if and only if $0<p<p_0$, while $\mathcal Z$ is a sampling sequence for $A^p$ (i.e., $\mu$ is a sampling measure for $A^p$) if and only if $p>p_0$; these facts are established in \cite[Section 6.7]{DS}. It follows that our current measure $\mu$ satisfies condition (ii) of Theorem \ref{thm:bergman} precisely when $p\ne p_0$. The theorem tells us therefore that the set $\mathcal C_B(\mu)$ coincides with $(0,\infty)\setminus\{p_0\}$, and we are done.
\qed

%\section*{Competing Interests}

%The author has no competing interests to declare that are relevant to the content of this article.

%\section*{Data availability statement}

%This article has no associated data.

\medskip

\end{document}